\documentclass[12pt, a4paper]{article}


\usepackage{amsmath}
\usepackage{amsthm}
\usepackage{amssymb}

\usepackage[english]{babel}

\usepackage{tikz}

\usepackage{comment}
\usepackage{enumitem}
\usepackage{authblk}



\theoremstyle{plain}
\newtheorem{thm}{Theorem}[section]

\newtheorem{defi}[thm]{Definition}
\newtheorem{lem}[thm]{Lemma}

\theoremstyle{remark}

\newtheorem*{rem*}{Remark}








\newcommand{\N}{{\mathbb{N}}} 
\newcommand{\R}{{\mathbb{R}}} 

\newcommand{\NN}{{\mathbb{N}}} 



\DeclareSymbolFont{bbold}{U}{bbold}{m}{n}
\DeclareSymbolFontAlphabet{\mathbbold}{bbold}






\numberwithin{equation}{section}

\newcommand{\Addresses}{{
  \bigskip
  \footnotesize

	\noindent
  Sigrid Grepstad, \textsc{Department of Mathematical Sciences, Norwegian University of 
	Science and Technology, 7491 Trondheim, Norway.} 
	\par\nopagebreak
	\noindent \textit{E-mail address: }\texttt{sigrid.grepstad@ntnu.no}

  \medskip
	\noindent
  Lisa Kaltenb\"ock and Mario Neum\"uller, \textsc{Department of Financial Mathematics and applied 
	Number Theory, Johannes Kepler University, Altenbergerstra{\ss}e 69, 4040 
	Linz, Austria}
  \par\nopagebreak
  \noindent \textit{E-mail addresses: }\texttt{lisa.kaltenboeck@jku.at} and 
	\texttt{mario.neumueller@jku.at}}}

\allowdisplaybreaks

\title{A positive lower bound for $\liminf_{N\to\infty} \prod_{r=1}^N \left| 2\sin \pi r \varphi \right|$}

\author{Sigrid Grepstad, Lisa Kaltenb\"ock and Mario Neum\"uller \thanks{This work was funded 
by the Austrian Science Fund (FWF): Project F5507-N26 and Project F5509-N26, which is part of the Special Research 
Program ``Quasi-Monte Carlo Methods: Theory and Applications''.}}

\date{}


\begin{document}

	\maketitle

	\begin{abstract}
	Nearly 60 years ago, Erd\H{o}s and Szekeres raised the question of whether
	 	$$\liminf_{N\to \infty} \prod_{r=1}^N \left| 2\sin \pi r \alpha \right| =0$$
	for all irrationals $\alpha$ \cite{erdos}. Despite its simple formulation, the question has remained unanswered. It was shown by Lubinsky in 1999 that the answer is yes if $\alpha$ has unbounded continued fraction coefficients, and it was suggested that the answer is yes in general \cite{Lu99}. 
However, we show in this paper that for the golden ratio $\varphi=(\sqrt{5}-1)/2$,
		$$\liminf_{N\to \infty} \prod_{r=1}^N \left| 2\sin \pi r \varphi \right| >0 ,$$
providing a negative answer to this long-standing open problem.

	\end{abstract}

	\centerline{\begin{minipage}[hc]{130mm}{
	{\em Keywords:} Trigonometric product, Fibonacci numbers, golden ratio, Zeckendorf 
	representation, Kronecker	sequence, $q$-series. \\
	{\em MSC 2010:} 26D05, 41A60, 11B39 (primary), 11L15, 11K31 (secondary)}
	\end{minipage}}


\section{Introduction}
In this paper we study the asymptotic behaviour of the sequence of sine products 
\begin{equation}\label{Def:PN}
	P_N(\alpha):=\prod_{r=1}^{N}|2\sin(\pi r \alpha)|,
\end{equation}
where $N\in \N$, and $\alpha \in \R$ is fixed. For rational $\alpha = p/q$ it is clear that 
$P_N(\alpha)=0$ for all $N\geq q$, so we restrict our attention to irrational $\alpha$. Moreover, since $P_N(\alpha)=P_N(\{\alpha\})$, where $\{\cdot\}$ denotes the fractional part, we consider only $0<\alpha <1$.

The study of the sequence $P_N(\alpha)$ goes back to the late 1950s, when questions about its asymptotic behaviour were raised by Erd\H{o}s and Szekeres \cite{erdos}. 
Another early exposition on $P_N(\alpha)$ was given by Sudler in \cite{Su64}, giving rise to the name \emph{Sudler product}. 
The continued analysis of $P_N(\alpha)$ has been carried out in a number of different fields in both pure and applied mathematics (such as partition theory \cite{Su64, Wr64}, Pad\'{e} approximation and continued fractions \cite{Lu99}, as well as KAM theory and the theory of strange non-chaotic attractors \cite{AwHiSh13,GrOtPeYo84,KnTa2012,KuAr95}). This broad interest in the Sudler product has lead to a range of different notations and terminologies, making it challenging to get a full picture of what is actually known. For a compact survey of central results on $P_N(\alpha)$, we recommend the introduction of \cite{MV16}. For a survey on the more general product
\begin{equation*}
P_N\left((x_k)_{k\in\NN}\right) := \prod_{r=1}^N |2\sin(\pi x_r)|,
\end{equation*}
where $(x_k)_{k\in \N}$ is a uniformly distributed sequence in the unit interval, we recommend \cite{ALPET18}.

A long-standing open question raised by Erd\H{o}s and Szekeres in 1959 is: what can we say about $\liminf_{N \to \infty} P_N(\alpha)$? This question occupied Lubinsky, who studied the product $P_N(\alpha)$ in the context of $q$-series in \cite{Lu99}. In his paper, Lubinsky shows that if $\alpha$ has \emph{unbounded} continued fraction coefficients, then surely 
\begin{equation*}
\liminf_{N \to \infty} \prod_{r=1}^N |2\sin \pi r \alpha| =0 .
\end{equation*} 
Moreover, he expresses that he ``feels certain that it is true in general''.
%

The main goal of this paper is to show that, in fact, this is not the case. 
\begin{thm}\label{thm:CountExLub}
	If $\varphi=(\sqrt{5}-1)/2$, then
	\begin{equation}
		\liminf_{N\to\infty} P_N(\varphi) = \liminf_{N\to\infty} \prod_{r=1}^N \left| 2\sin(\pi r \varphi) \right|>0.
	\end{equation}
\end{thm}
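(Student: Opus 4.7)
The plan is to exploit the golden-ratio-specific structure: the denominators of the convergents of $\varphi$ are the Fibonacci numbers $F_k$, and $\|F_k\varphi\| = \varphi^k$ decreases geometrically. Concretely, $F_k\varphi \equiv (-1)^{k+1}\varphi^k \pmod 1$, so shifting the argument of the Sudler product by $F_k\varphi$ is an exponentially small perturbation.

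First I would establish that $P_{F_k}(\varphi)$ is bounded away from zero along the Fibonacci subsequence. Work in the literature on this question (e.g.\ Mestel and Verschueren [MV16]) already shows the stronger statement that $P_{F_k}(\varphi)$ tends to a positive limit $c_\infty$; for the theorem at hand, the weaker bound $\inf_k P_{F_k}(\varphi)>0$ is what we need. Next, for a general $N$, write $N$ in its Zeckendorf representation $N = F_{k_1}+F_{k_2}+\cdots+F_{k_s}$ with $k_1>k_2>\cdots>k_s\ge 2$ and $k_i-k_{i+1}\ge 2$, and set $M_i=\sum_{j\le i}F_{k_j}$. The product factors as
$$ P_N(\varphi) \;=\; \prod_{i=1}^{s} B_i, \qquad B_i \;=\; \prod_{j=1}^{F_{k_i}} \bigl|2\sin\bigl(\pi(j\varphi+\Delta_i)\bigr)\bigr|, $$
where $\Delta_i:=\{M_{i-1}\varphi\}$ has magnitude at most $O(\varphi^{k_{i-1}})$. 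Writing $B_i = P_{F_{k_i}}(\varphi)\cdot R_i$, with $R_i$ the multiplicative correction coming from the shift, reduces the theorem to proving a uniform lower bound $\prod_{i=1}^s R_i\ge c>0$ independent of $N$.

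The heart of the argument is estimating $R_i$. Since the three-distance theorem tells us that the points $\{j\varphi\}_{j=1}^{F_{k_i}}$ have gaps only of sizes $\varphi^{k_i-1}$ and $\varphi^{k_i}$, the smallest value of $|\sin(\pi j\varphi)|$ over $j\le F_{k_i}$ is of order $\varphi^{k_i}$, and the shift $|\Delta_i|\lesssim \varphi^{k_{i-1}}\le \varphi^{k_i+2}$ is strictly smaller than this, so no factor of $B_i$ is anomalously small. This allows expanding
$$\log\Bigl|\frac{\sin(\pi(j\varphi+\Delta_i))}{\sin(\pi j\varphi)}\Bigr| \;=\; \pi\Delta_i\cot(\pi j\varphi) + O\!\left(\frac{\Delta_i^2}{\sin^2(\pi j\varphi)}\right),$$
and summing over the block. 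The self-similar Fibonacci structure of the orbit $\{j\varphi\}$ should then allow one to evaluate (or telescope) the cotangent sums block-by-block and to verify that the resulting series $\sum_i \log R_i$ converges uniformly in $N$.

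The main obstacle will be precisely this last point. The naive Lipschitz-type bound $|\log R_i|\lesssim |\Delta_i|\sum_{j\le F_{k_i}}1/|\sin(\pi j\varphi)|\asymp \varphi^{k_{i-1}-k_i}k_i\asymp k_i$ is too crude, because $N<F_{K}$ allows up to $\sim K/2$ blocks with $\sum_i k_i \sim K^2$ and so summing absolute values diverges with $N$. The proof must therefore exploit \emph{cancellation}, either through the signed cotangent sums $\sum_{j=1}^{F_k}\cot(\pi j\varphi)$ (which should be much smaller than their $\ell^1$ size) or through the alternating-sign identity $\Delta_i \equiv \sum_{j<i}(-1)^{k_j+1}\varphi^{k_j}\pmod 1$. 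Proving a uniform bound $\prod_i R_i\ge c>0$ via such cancellation is the decisive technical step; once that is in hand, combining with $\inf_k P_{F_k}(\varphi)>0$ yields $\liminf_N P_N(\varphi)>0$.
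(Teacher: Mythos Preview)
Your Zeckendorf block decomposition $P_N(\varphi)=\prod_i B_i$ with $B_i=P_{F_{k_i}}(\varphi)\cdot R_i$ is exactly the paper's starting point (up to reversing the block order), and your identification of the size of the shift $|\Delta_i|\asymp\varphi^{k_{i-1}}$ relative to the smallest factor $\asymp\varphi^{k_i}$ is correct. The genuine gap is in your treatment of $\prod_i R_i$.

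You propose to show $\prod_i R_i\ge c>0$ via cancellation in the linear cotangent term $\pi\Delta_i\sum_{j\le F_{k_i}}\cot(\pi j\varphi)$. But even granting that cancellation, the quadratic remainder $-\tfrac{\pi^2}{2}\Delta_i^2\sum_{j\le F_{k_i}}\csc^2(\pi j\varphi)\asymp\varphi^{2(k_{i-1}-k_i)}$ has a definite sign and is of order $\varphi^4$ per block when every Zeckendorf gap equals~$2$; summed over $s\asymp\log N$ blocks it diverges. So the Taylor-expansion route cannot close as written, and it is not even clear that the target inequality $\prod_i R_i\ge c$ is true. (Note also that ``$\inf_k P_{F_k}(\varphi)>0$'' alone is not what you need to combine with it: you would need $\inf_k P_{F_k}(\varphi)\ge 1$, since this factor appears raised to the power~$s$.)

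The paper sidesteps the whole issue. The decisive observation is that the Mestel--Verschueren limit $c_\infty=\lim_n P_{F_n}(\varphi)\approx 2.407$ is comfortably larger than~$1$, so there is slack to absorb a bounded multiplicative loss \emph{in each block separately}. Concretely, the paper extends the $A_nB_nC_n$ decomposition of \cite{MV16} to the shifted product, $P_{F_n}(\varphi,\varepsilon)=\overline A_n(\varepsilon)\,B_n\,\overline C_n(\varepsilon)$ with the same $B_n$, and proves
\[
R_j=\frac{\overline A_{n_j}(\varepsilon_j)\,\overline C_{n_j}(\varepsilon_j)}{A_{n_j}C_{n_j}}\;\ge\;(1+p_j)\Bigl(1-\tfrac{1}{7}(1+2p_j)^2\Bigr)-\mathcal O(\varphi^{n_j/5}),
\]
where $p_j:=-\varepsilon_j(-\varphi)^{-n_j}\in[-\varphi^2,\varphi]$. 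An elementary check shows the right-hand side exceeds $5/12$ on that interval, while $P_{F_{n_j}}(\varphi)>12/5$ for all large $n_j$; hence $B_j=P_{F_{n_j}}(\varphi)\,R_j\ge 1$ for all but boundedly many blocks, and those finitely many are handled by the crude bound $B_j\ge K_1>0$ from \cite{MV16}. No cross-block cancellation is ever invoked: what you flagged as the ``decisive technical step'' is replaced by a blockwise numerical inequality exploiting $c_\infty>1$.
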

The number $\varphi=(\sqrt{5}-1)/2$, known as the fractional part of the golden ratio, has the simplest possible continued fraction expansion 
$$\varphi= \frac{1}{\displaystyle 1+\frac{1}{\displaystyle 1+\frac{1}{\displaystyle 1+ \ldots}}} 
	= [0;\overline{1}].$$
This observation is key in establishing Theorem \ref{thm:CountExLub}. Nevertheless, we suspect that $\liminf_{N\to \infty} P_N(\alpha)>0$ also for other quadratic irrationals $\alpha$ (see Section~\ref{sec:quadirrat} for a discussion on this).

In the following section, we present our strategy for proving Theorem~\ref{thm:CountExLub}. 
The proof relies heavily on a paper by Mestel and Verschueren \cite{MV16}, where the asymptotic behaviour of the subsequence $P_{F_n}(\varphi)$ is investigated for the Fibonacci sequence $(F_n)_{n\in \N_0}$.
Let us therefore briefly review the connection between the golden ratio $\varphi$ and the Fibonacci sequence before we present our proof strategy.

\subsection{The Fibonacci sequence}
Throughout this paper, we denote by $\varphi$ the (fractional part of the) golden ratio 
\begin{equation*}
\varphi := \frac{\sqrt{5}-1}{2},
\end{equation*}
and by $(F_n)_{n\in \N_0} = (0,1,1,2,3,5,8,13, \ldots)$ the sequence of Fibonacci numbers. There is an intimate relationship between $\varphi$ and the Fibonacci sequence; $(F_n)_{n\geq 1}$ is precisely the sequence of best approximation denominators of $\varphi$. Moreover, we have the property
	\begin{equation}\label{eq:GoldenRatioFibonacci}
		F_n\varphi= F_{n-1} - (-\varphi)^{n},
	\end{equation}
for all values of $n\in\N$.

Finally, recall that any positive integer $N$ has a unique expansion in terms of the Fibonacci sequence, known as its \emph{Zeckendorf representation} \cite{Z92}.
\begin{defi}
\label{def:zeckendorf}
Any $N \in \N$ has a unique Zeckendorf representation 
\begin{equation*}
N=\sum_{j=1}^m F_{n_j},
\end{equation*}
where $(F_n)_{n\in \N_0}$ is the Fibonacci sequence, and:
\begin{enumerate}[label=(\roman*)]
\item $n_1 \geq 2$;
\item \label{item:jump} $n_{j+1}>n_j+1$ for all $j \in \{1, \ldots , m-1\}$.
\end{enumerate}
Moreover, it is well known that $n_m = \mathcal{O}(\log N)$ (see e.g. \cite[p.~126]{KN74}).
\end{defi}
In other words, we can associate to any $N \in \N$ a unique integer sequence $(n_1, \ldots , n_{m})$. Note that since $m<n_m$, the length of this sequence is $m = \mathcal{O}(\log N)$.


\section{Strategy}\label{sec:strategy}
The proof of Theorem \ref{thm:CountExLub} relies on central results in a recent paper by Mestel and Verschueren \cite{MV16}. In this paper, the authors analyse the asymptotic behaviour of the product sequence $\left(P_N(\varphi)\right)_{N\geq 1}$ for the golden ratio $\varphi$, and show in particular that:
\begin{thm}[{\cite[Theorem 3.1]{MV16}}]\label{thm:MainResultMV}
	The subsequence $\left(P_{F_n}(\varphi)\right)_{n\geq 1}$ is convergent, and 
	\begin{equation}
		\lim_{n\to\infty}P_{F_n}(\varphi)=\lim_{n\to \infty}\prod_{r=1}^{F_n}|2\sin(\pi r \varphi)|
		=2.407\ldots \,. 
	\end{equation}
\end{thm}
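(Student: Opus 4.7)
My plan is to reduce convergence of $(P_{F_n}(\varphi))_n$ to a renormalization identity and then control the resulting recursion. Writing $L_n := \log P_{F_n}(\varphi)$, I first split the product at $r = F_{n-1}$: for $r = F_{n-1} + s$ with $s \in \{1, \ldots, F_{n-2}\}$, identity \eqref{eq:GoldenRatioFibonacci} gives $r\varphi \equiv s\varphi + \delta_n \pmod 1$ with $\delta_n := -(-\varphi)^{n-1}$ and $|\delta_n| = \varphi^{n-1}$. This yields the telescoping relation
\begin{equation*}
L_n - L_{n-1} = \sum_{s=1}^{F_{n-2}} \log\bigl|2\sin(\pi(s\varphi + \delta_n))\bigr|,
\end{equation*}
so that convergence of $L_n$ is equivalent to summability of the right-hand side in $n$.

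To study this, I introduce the \emph{perturbed Sudler product} $Q_n(y) := \prod_{r=1}^{F_n} |2\sin(\pi(r\varphi + y))|$, so that $Q_n(0) = P_{F_n}(\varphi)$. Applying the same Fibonacci splitting to the shifted sequence $r\varphi + y$ gives the renormalization identity
\begin{equation*}
Q_n(y) = Q_{n-1}(y) \cdot Q_{n-2}(y + \delta_n),
\end{equation*}
whose value at $y = 0$ reads $P_{F_n}(\varphi) = P_{F_{n-1}}(\varphi) \cdot Q_{n-2}(\delta_n)$. It therefore suffices to show $Q_{n-2}(\delta_n) \to 1$. The subtle point is that $Q_{n-2}(0)$ itself converges to the unknown positive limit $L \approx 2.407$ while $\delta_n \to 0$, so $y \mapsto Q_n(y)$ cannot converge continuously as $n \to \infty$: indeed, $Q_{n-2}$ has a zero at $y = -F_{n-2}\varphi \bmod 1$ lying at distance $\varphi^{n-2}$ from the origin, only one factor of $\varphi$ larger than $|\delta_n|$.

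To bridge this gap, I would factor the ratio
\begin{equation*}
\frac{Q_{n-2}(\delta_n)}{Q_{n-2}(0)} = \prod_{s=1}^{F_{n-2}} \frac{|2\sin(\pi(s\varphi + \delta_n))|}{|2\sin(\pi s\varphi)|}
\end{equation*}
into a \emph{resonant} part indexed by Fibonacci numbers $s = F_k$ with $k \leq n-2$ (where $\{s\varphi\}$ is close to $0$) and a \emph{non-resonant} part over the remaining indices. Using the identity $1 - \varphi = \varphi^2$, the resonant factor at $s = F_{n-2}$ can be computed to equal asymptotically $\varphi^2$; analogous computations at $s = F_{n-3}, F_{n-4}, \ldots$ produce an explicit sequence of correction factors that assemble into a telescoping infinite product expected to equal $1/L$. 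For the non-resonant indices, a Taylor expansion of $\log|2\sin(\pi(s\varphi + \delta))|$ around $\delta = 0$, combined with cancellation in the cotangent sum $\sum_s \cot(\pi s\varphi)$ (of size $O(\log F_{n-2})$ rather than $O(F_{n-2})$, by a Dedekind-type reciprocity argument), yields a summable contribution of order $n\varphi^n$. The main obstacle will be rigorously organizing the resonant corrections across all scales $k$ so that the resulting telescoping product converges, and then identifying it with a concrete $q$-series expression at $q = e^{2\pi i \varphi}$ whose value produces the numerical limit $L \approx 2.407$.
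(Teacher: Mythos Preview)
Note first that this paper does not itself prove Theorem~\ref{thm:MainResultMV}; it is quoted from Mestel--Verschueren \cite{MV16} and used as input. What the present paper reveals about the Mestel--Verschueren argument is the decomposition $P_{F_n}(\varphi)=A_nB_nC_n$ of \eqref{eq:DecompPFn}--\eqref{Def:Cn}: one reindexes via $t\equiv F_{n-1}r\pmod{F_n}$ so that the orbit points $\{r\varphi\}$ are compared against the equispaced nodes $t/F_n$, and then shows that each of $A_n,B_n,C_n$ converges separately (with $A_n\to 2\pi/\sqrt{5}$ and $B_n,C_n$ tending to explicit infinite products in the variable $u_t$ of \eqref{eq:defut}). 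No recursion in $n$ is used; the limit is identified in one shot for each $n$.

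Your renormalization route via $Q_n(y)=Q_{n-1}(y)\,Q_{n-2}(y+\delta_n)$ is a genuinely different idea, and the recursion is correct. The sketch, however, is circular as written. You reduce convergence of $P_{F_n}(\varphi)$ to showing $Q_{n-2}(\delta_n)\to 1$ with summable defect, and then analyse $Q_{n-2}(\delta_n)$ by factoring it as $Q_{n-2}(0)$ times a ratio whose resonant part you ``expect'' to equal $1/L$. But $Q_{n-2}(0)=P_{F_{n-2}}(\varphi)$, and its convergence to $L$ is exactly the statement under proof; you cannot invoke $L$ before establishing that it exists. To make this approach work you would have to estimate $Q_{n-2}(\delta_n)$ directly rather than as a ratio to $Q_{n-2}(0)$, or else set up a joint recursion and prove a contraction uniformly in the shift variable $y$ --- neither of which is addressed. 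A second warning: your Taylor step produces $\delta_n\sum_{s\le F_{n-2}}\cot(\pi s\varphi)$, and the resonant indices you have \emph{already} removed from this sum are precisely the ones carrying the large cotangent values (at $s=F_k$ one has $|\cot(\pi s\varphi)|\asymp\varphi^{-k}$), so the resonant/non-resonant split must be done \emph{before} linearizing, and the bookkeeping between the two pieces is where the real work lies. The $A_nB_nC_n$ decomposition sidesteps both difficulties by never iterating in $n$.
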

A consequence of Theorem \ref{thm:MainResultMV} is that the general product $P_N(\varphi)$ must necessarily obey polynomial bounds\footnote{This was first established by Lubinsky in \cite{Lu99} using a different approach.} 
\begin{equation}
\label{eq:polybounds}
N^{C_1} \leq P_N(\varphi) \leq N^{C_2} ,
\end{equation}	
where $C_1 \leq 0 < 1 \leq C_2$. These bounds are established as follows: expressing the integer $N$ by its Zeckendorf representation $N= \sum_{j=1}^m F_{n_j}$, we can rewrite $P_N(\varphi)$ as 
\begin{equation}
\label{eq:prod}
P_N(\varphi) = \prod_{j=1}^{m} \prod_{r=1}^{F_{n_j}} \left| 2 \sin \pi (r\varphi + k_j \varphi)\right|,
\end{equation} 
where $k_j = \sum_{s=j+1}^m F_{n_s}$ for $1 \leq j \leq m-1$ and $k_m=0$ (see \cite[p.~220]{MV16} for further details). Mestel and Verschueren then show that:
\begin{lem}[{see \cite[p.~220-221]{MV16}}]
\label{lem:boundshift}
There exist real constants $0<K_1\leq 1 \leq K_2$ (independent of $N$) bounding all terms in \eqref{eq:prod}, i.e.\ so that
\begin{equation}
\label{eq:shiftbound}
K_1 \leq \prod_{r=1}^{F_{n_j}} |2 \sin \pi (r\varphi+ k_j \varphi)| \leq K_2,
\end{equation}
for all $1 \leq j \leq m$.
\end{lem}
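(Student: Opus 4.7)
The plan is to rewrite each factor in (2.4) as a perturbed Sudler product $g_n(\delta):=\prod_{r=1}^{F_n}|2\sin\pi(r\varphi+\delta)|$ with a small shift $\delta$, and then exploit Theorem \ref{thm:MainResultMV} at $\delta=0$ together with the near-symmetry of the Kronecker sequence $\{r\varphi\}$ to control the perturbation.

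\emph{Size of the shift.} Applying (1.2) termwise to $k_j=\sum_{s=j+1}^m F_{n_s}$ gives
\[
k_j\varphi\;=\;\sum_{s=j+1}^m F_{n_s-1}\;-\;\sum_{s=j+1}^m(-\varphi)^{n_s},
\]
where the first sum is an integer. The Zeckendorf condition $n_{s+1}\geq n_s+2$ bounds the fractional part by a geometric tail, and using $1-\varphi^2=\varphi$ one obtains
\[
\Bigl|\sum_{s\geq j+1}(-\varphi)^{n_s}\Bigr|\;\leq\;\frac{\varphi^{n_{j+1}}}{1-\varphi^2}\;=\;\varphi^{n_{j+1}-1}\;\leq\;\varphi^{n_j+1}.
\]
Writing $\delta_j$ for this signed defect and $n=n_j$, the $j$th factor of (2.4) equals $g_n(\delta_j)$ with $|\delta_j|\leq\varphi^{n+1}$. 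Moreover, since Fibonacci denominators are best approximations, $\|r\varphi\|\geq\|F_n\varphi\|=\varphi^n$ for $1\leq r\leq F_n$, so $|\delta_j|\leq\varphi\,\|r\varphi\|<\|r\varphi\|$ and no factor of $g_n(\delta_j)$ vanishes.

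\emph{Reduction and main estimate.} By Theorem \ref{thm:MainResultMV}, $g_n(0)\to 2.407\ldots$, so there exists $n_0$ with $g_n(0)\in[1,3]$ for $n\geq n_0$; the finitely many $n<n_0$ are handled by continuity of $g_n$ on the compact set $[-\varphi^{n+1},\varphi^{n+1}]$, on which $g_n$ is strictly positive by the preceding remark. It thus suffices to bound $g_n(\delta_j)/g_n(0)$ uniformly for $n\geq n_0$. I would do this by integrating the logarithmic derivative
\[
(\log g_n)'(\delta)\;=\;\pi\sum_{r=1}^{F_n}\cot\pi(r\varphi+\delta).
\]
Pairing $r$ with $F_n-r$ and invoking (1.2), each pair contributes
\[
\cot\pi(r\varphi+\delta)-\cot\pi(r\varphi+\delta+(-\varphi)^n),
\]
of magnitude $O(\varphi^n\,\csc^2\pi\xi_r)$ by the mean value theorem. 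The near-equispacing of $\{r\varphi\}_{r=1}^{F_n}$ at scale $\varphi^n$, together with the uniform bound $\|r\varphi+\delta\|\geq\varphi^2\,\|r\varphi\|$ (a consequence of $|\delta|\leq\varphi\,\|r\varphi\|$), yields $\sum_r\csc^2\pi\xi_r=O(F_n^2)$ uniformly in $\delta\in[-\varphi^{n+1},\varphi^{n+1}]$. Hence the cotangent sum is $O(\varphi^n F_n^2)=O(F_n)$, and integration over an interval of length $\varphi^{n+1}=O(F_n^{-1})$ yields $|\log g_n(\delta_j)-\log g_n(0)|=O(1)$, producing the required uniform constants $K_1,K_2$.

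\emph{Main obstacle.} The principal technical point is verifying the $\csc^2$ estimate uniformly in $\delta$. The sum is dominated by a few indices $r\approx F_{n-k}$ (small $k$), for which $\|r\varphi\|$ is comparable to $|\delta|$ and the naive factor-by-factor bound explodes. The robustness of the inequality $|\delta|\leq\varphi\,\|r\varphi\|$, however, keeps $\|r\varphi+\delta\|$ within a constant factor of $\|r\varphi\|$ for every such $r$; combining this with the three-distance theorem for the Kronecker sequence at $N=F_n$ (which gives a controlled ordering of the values $\|r\varphi\|$) is what allows the pairing estimate to close uniformly in $j$.
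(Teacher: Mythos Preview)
Your approach is genuinely different from the one the paper relies on. The paper does not prove Lemma~\ref{lem:boundshift} itself but cites \cite{MV16}; the surrounding machinery, however (and the proof of the sharper Lemma~\ref{lem:boundslarge}), proceeds via the static product decomposition $P_{F_n}(\varphi,\varepsilon)=\overline{A}_n(\varepsilon)\,B_n\,\overline{C}_n(\varepsilon)$, in which the $\varepsilon$-dependence is isolated in two explicit factors that are then bounded using the infinite product $\prod_t(1-(1-2\varepsilon(-\varphi)^{-n})^2/u_t^2)$ and Lemma~\ref{lem:prodbound}. Your argument is instead differential: you differentiate in $\delta$, obtain a cotangent sum, exploit the near-reflection $r\leftrightarrow F_n-r$, and integrate back over $|\delta|\leq\varphi^{n+1}$. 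The differential route is more portable (it needs no bespoke decomposition and would adapt to other quadratic irrationals more directly), while the decomposition route yields the explicit numerical constants the paper needs downstream for Lemma~\ref{lem:boundslarge}.

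There is, however, a real gap in your pairing step. After pairing, each pair contributes
\[
\cot\pi(r\varphi+\delta)-\cot\pi\bigl(r\varphi+(-\varphi)^n-\delta\bigr)
\]
(note the sign on $\delta$ in the second term, which you have wrong), and you invoke the mean value theorem at an intermediate $\xi_r$. But your justification $\|r\varphi+\delta\|\geq\varphi^2\|r\varphi\|$ controls only the \emph{first} endpoint. The second endpoint is displaced from $r\varphi$ by $(-\varphi)^n-\delta$, whose magnitude can reach $\varphi^n+\varphi^{n+1}=\varphi^{n-1}=\|F_{n-1}\varphi\|$; at $r=F_{n-1}$ your stated inequality therefore gives no lower bound on $\|\xi_r\|$, and a priori the MVT interval might contain an integer. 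The argument is salvageable because of a sign coincidence you did not invoke: for $r=F_{n-1}$ the shift $(-\varphi)^n$ always pushes $r\varphi$ \emph{away} from its nearest integer (check both parities of $n$ using \eqref{eq:GoldenRatioFibonacci}), so both endpoints lie on the same side and $\|\xi_r\|\gtrsim\varphi^{n-1}$ after all. Cleaner still is to drop the MVT in favour of the identity $\cot a-\cot b=\sin(b-a)/(\sin a\sin b)$ and bound both endpoints directly; the three-distance ordering you cite then gives $\sum_r 1/(\|x_r\|\,\|y_r\|)=O(F_n^2)$, and the rest of your argument closes.
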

It immediately follows from Lemma \ref{lem:boundshift} and \eqref{eq:prod} that
\begin{equation*}
K_1^m \leq P_N(\varphi) \leq K_2^{m}.
\end{equation*}
Finally, since the Zeckendorf representation of $N$ has length $m =\mathcal{O}(\log N)$, we get \eqref{eq:polybounds} for some constants $C_1 < C_2$. It follows immediately from Theorem \ref{thm:MainResultMV} that $C_1 \leq 0$ (and an argument of why $C_2\geq 1$ is given in \cite[p.~219]{MV16}).

Our strategy for concluding that $\liminf_{N\to \infty} P_N(\varphi) >0$ is to evaluate the subproducts in \eqref{eq:prod} more carefully for large values of $j$. 
\begin{lem}
\label{lem:boundslarge}
There exists a threshold value $J \in \N$ (independent of $N$) such that for all terms in \eqref{eq:prod} where $j> J$, we have
\begin{equation*}
\prod_{r=1}^{F_{n_j}} |2 \sin \pi (r\varphi+ k_j \varphi)| \geq 1 .
\end{equation*} 
\end{lem}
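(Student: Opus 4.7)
The plan is to show that for $j$ sufficiently large, the $j$-th subproduct in \eqref{eq:prod} exceeds $1$; indeed, I aim to show that it stays close to the positive limit $L:=\lim_{n\to\infty}P_{F_n}(\varphi)\approx 2.407$ furnished by Theorem~\ref{thm:MainResultMV}. The first step is to control the shift $k_j\varphi$ modulo $1$. Iterating \eqref{eq:GoldenRatioFibonacci} gives
$$k_j\varphi\;=\;\sum_{s=j+1}^{m}F_{n_s}\varphi\;=\;\sum_{s=j+1}^{m}F_{n_s-1}\;-\;\sum_{s=j+1}^{m}(-\varphi)^{n_s},$$
so $k_j\varphi\equiv\tau_j\pmod 1$ with $\tau_j:=-\sum_{s=j+1}^{m}(-\varphi)^{n_s}$. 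The Zeckendorf gap condition $n_{s+1}\ge n_s+2$ together with the identity $1-\varphi^2=\varphi$ yields
$$|\tau_j|\;\le\;\sum_{s=j+1}^{m}\varphi^{n_s}\;\le\;\frac{\varphi^{n_{j+1}}}{1-\varphi^2}\;=\;\varphi^{n_{j+1}-1}\;\le\;\varphi^{n_j+1},$$
and since $n_j\ge n_1+2(j-1)\ge 2j$, the shift $\tau_j\to 0$ as $j\to\infty$. The $j$-th subproduct can now be written as
$$Q_j\;:=\;\prod_{r=1}^{F_{n_j}}\bigl|2\sin\pi(r\varphi+\tau_j)\bigr|,$$
a shifted analogue of $P_{F_{n_j}}(\varphi)$; what remains is to show that $Q_j\ge 1$ for all $j$ large enough, uniformly in the admissible shifts.

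The main obstacle is that the minimal gap $\sim 1/F_{n_j}$ of the point set $\{r\varphi\bmod 1:1\le r\le F_{n_j}\}$ is of the same order $\varphi^{n_j}$ as $\tau_j$. In particular, the factor for $r=F_{n_j}$, whose unshifted value $|2\sin\pi(-\varphi)^{n_j}|\approx 2\pi\varphi^{n_j}$ is already tiny, can be altered by a bounded multiplicative constant by the shift, and similar (though milder) effects occur for the next few near-minimal factors; the direct perturbation estimate $\log|2\sin\pi(x+\tau)|-\log|2\sin\pi x|\approx\pi\tau\cot\pi x$ therefore fails near the zeros of $\sin\pi x$. My plan is to adapt the Fibonacci-type recursion that Mestel and Verschueren~\cite{MV16} use in their proof of Theorem~\ref{thm:MainResultMV}, carrying the shift $\tau_j$ through the recursion. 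Since the rescaled shift $\tau_j\cdot F_{n_j}$ is confined to a fixed compact subinterval of $(-1,1)$, I expect the induced renormalisation to have the same attracting behaviour as in the unshifted case and to produce a limit bounded below by $1$ uniformly in the rescaled shift parameter. Securing this uniformity---as opposed to convergence for one prescribed sequence of shifts---is the delicate point; once in hand, choosing $J$ so large that $Q_j>1$ for every $j>J$ completes the proof.
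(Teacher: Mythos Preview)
Your reduction to the shifted product $Q_j=\prod_{r=1}^{F_{n_j}}|2\sin\pi(r\varphi+\tau_j)|$ with $\tau_j=-\sum_{s>j}(-\varphi)^{n_s}$ and the bound $|\tau_j|\le\varphi^{n_j+1}$ is correct and coincides with the paper's set-up (their $\varepsilon_j$ is your $\tau_j$). You also correctly identify the heart of the matter: the shift is of the same order $\varphi^{n_j}$ as the smallest factor, so a naive perturbation argument fails.

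However, from that point on your proposal is not a proof but a programme: you announce that you will ``adapt the Fibonacci-type recursion'' of \cite{MV16}, you ``expect'' the renormalisation to behave well, and you flag the required uniformity over the rescaled shift parameter as ``the delicate point'' --- and then stop. Nothing is actually established. This is precisely the step where all the work lies, and the paper does not proceed by renormalisation/recursion at all. Instead it uses the explicit decomposition $P_{F_n}(\varphi)=A_nB_nC_n$ of \cite{MV16}, derives the analogous decomposition $P_{F_n}(\varphi,\varepsilon)=\overline{A}_n(\varepsilon)\,B_n\,\overline{C}_n(\varepsilon)$ for the shifted product, and then bounds the two ratios $\overline{A}_{n_j}/A_{n_j}$ and $\overline{C}_{n_j}/C_{n_j}$ explicitly in terms of the single scalar $p_j:=-\varepsilon_j(-\varphi)^{-n_j}\in[-\varphi^2,\varphi]$. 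Concretely, $\overline{A}_{n_j}/A_{n_j}=1+p_j+\mathcal{O}(\varphi^{2n_j})$ and $\overline{C}_{n_j}/C_{n_j}\ge 1-\tfrac{1}{7}(1+2p_j)^2-\mathcal{O}(\varphi^{n_j/5})$; the elementary calculus fact that $(1+x)\bigl(1-\tfrac{1}{7}(1+2x)^2\bigr)>5/11$ on $[-\varphi^2,\varphi]$, combined with $P_{F_{n_j}}(\varphi)\to L>12/5$, then gives $Q_j\ge 1$ for large $j$.

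So the gap is that your proposal contains no argument for the key uniform lower bound on $Q_j$; the hoped-for renormalisation adaptation is neither carried out nor obviously available in \cite{MV16} in the form you need. If you wish to complete the proof along the paper's lines, the missing ingredient is the perturbed $\overline{A}_n B_n \overline{C}_n$ decomposition and the two explicit ratio estimates above.
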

Combining Lemmas \ref{lem:boundshift} and \ref{lem:boundslarge}, we find that 
\begin{equation*}
P_N(\varphi) = \prod_{j=1}^{m} \prod_{r=1}^{F_{n_j}} \left| 2 \sin \pi (r\varphi + k_j \varphi)\right| \geq K_1^{J} > 0,
\end{equation*}
confirming Theorem \ref{thm:CountExLub}.

The proof of Lemma \ref{lem:boundslarge} is given in Section \ref{sec:ProofMainThm}. It requires a certain decomposition of the product $\prod_{r=1}^{F_{n_j}} \left| 2 \sin \pi (r\varphi + k_j \varphi)\right|$ into three more manageable subproducts. This decomposition is inspired by the work of Mestel and Verschueren, and is thoroughly described in the following section. 


\section{Decomposition}
It is shown in \cite[Lemma 5.1]{MV16} that the product $P_{F_n}(\varphi)$ can be split into three subproducts
	\begin{equation}\label{eq:DecompPFn}
		P_{F_n}(\varphi)= \prod_{r=1}^{F_n} \left| 2 \sin \pi r \varphi \right| = A_nB_nC_n,
	\end{equation}
where
	\begin{align}
		A_n &=|2F_n\sin(\pi \varphi^n)| \label{Def:An},\\
		B_n &= \prod_{t=1}^{F_n-1}\left|\frac{s_{nt}}{2\sin(\pi t/F_n)}\right| \label{Def:Bn},\\
		C_n &= \prod_{t=1}^{(F_n-1)/2}\left(1-\frac{s_{n0}^2}{s_{nt}^2}\right) \label{Def:Cn},
	\end{align}
and where
	\begin{equation}\label{def:sn}
		s_{nt}:= 2\sin\pi\left(\frac{t}{F_n}-\varphi^n\left(\left\{\frac{tF_{n-1}}{F_n}\right\}-1/2
			\right)\right).
	\end{equation}
Note that whenever $F_n$ is even, the notation $\prod_{t=1}^{(F_n-1)/2}$ in \eqref{Def:Cn} indicates that the final term is raised to the power $1/2$. Morever, we point out that $s_{nt}$ necessarily satisfies $s_{nt} = s_{n(F_n-t)}$ for $t \in \{ 1, \ldots , F_n-1\}$.

A similar decomposition can be established for a perturbed version of $P_{F_n}(\varphi)$. Let us introduce the notation 
\begin{equation}
\label{eq:shiftedpn}
P_{F_n}(\varphi, \varepsilon) = \prod_{r=1}^{F_n} \left| 2 \sin \pi (r\varphi+ \varepsilon) \right| ,
\end{equation}
where $\varepsilon$ is some fixed, real number.
We claim the following: 
\begin{lem}
We have 
	\begin{equation}\label{eq:PerturbedProductDecomposition}
		 P_{F_n}(\varphi,\varepsilon)= \overline{A}_{n}(\varepsilon)B_{n}\overline{C}_n(\varepsilon),
	\end{equation}
	where 
	\begin{align}
		\overline{A}_n(\varepsilon)=2F_{n}|\sin \pi((-\varphi)^{n}-\varepsilon)| \label{eq:DefAiEpsi},\\
		\overline{C}_n(\varepsilon)=\prod_{t=1}^{(F_{n}-1)/2}\left(1-\frac{v_{n}^2}{s_{nt}^2}\right)\label{eq:DefCiEpsi},
	\end{align}
	$B_n$ and $s_{nt}$ are given in \eqref{Def:Bn} and \eqref{def:sn} respectively, and 
	\begin{equation}
	\label{def:vn} 
		v_n:=2\sin \pi \left(\frac{(-\varphi)^{n}}{2} - \varepsilon\right).
	\end{equation}
\end{lem}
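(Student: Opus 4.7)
The plan is to mimic the derivation of the unperturbed decomposition \eqref{eq:DecompPFn} in \cite[Lemma~5.1]{MV16}, now carrying the shift $\varepsilon$ through each step and tracking where it lands. The structural match between \eqref{eq:DecompPFn} and \eqref{eq:PerturbedProductDecomposition}---in which $B_n$ is unchanged while only $\overline{A}_n(\varepsilon)$ and $\overline{C}_n(\varepsilon)$ acquire an $\varepsilon$-dependence, in each case through a single shift inside a $\sin\pi(\cdot)$ argument---strongly suggests that $\varepsilon$ should enter cleanly only in the boundary term $r=F_n$ and in the centre of the pairing of the interior terms.

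First I would split off the $r=F_n$ factor. Using $F_n\varphi=F_{n-1}-(-\varphi)^n$ from \eqref{eq:GoldenRatioFibonacci} together with $|\sin\pi(k+x)|=|\sin\pi x|$ for integer $k$, this factor simplifies to $|2\sin\pi((-\varphi)^n-\varepsilon)|$, which matches $\overline{A}_n(\varepsilon)$ up to the extra factor $F_n$ to be supplied later. For the interior indices $r\in\{1,\dots,F_n-1\}$ I would then pair $r$ with $F_n-r$. A second application of \eqref{eq:GoldenRatioFibonacci} yields
\begin{equation*}
|2\sin\pi((F_n-r)\varphi+\varepsilon)|=|2\sin\pi(r\varphi+(-\varphi)^n-\varepsilon)|,
\end{equation*}
and the product-to-sum identity $4\sin A\sin B=2\cos(A-B)-2\cos(A+B)$, followed by $\cos\theta=1-2\sin^2(\theta/2)$, collapses each paired factor to
\begin{equation*}
|u_r^2-v_n^2|,\qquad u_r:=2\sin\pi\!\left(r\varphi+\tfrac{(-\varphi)^n}{2}\right),
\end{equation*}
with $v_n$ exactly as in \eqref{def:vn}. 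Crucially, $u_r$ is independent of $\varepsilon$, so all of the $\varepsilon$-dependence of the pairing is absorbed into $v_n$---this is the precise reason why $B_n$ does not change.

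It then remains to identify $u_r$ with $s_{n t(r)}$, where $t(r):=rF_{n-1}\bmod F_n$ realises the familiar bijection of $\{1,\dots,F_n-1\}$ to itself sending the pair $\{r,F_n-r\}$ to the pair $\{t(r),F_n-t(r)\}$. Using the Cassini-type identity $F_{n-1}^2\equiv(-1)^n\pmod{F_n}$ together with $F_{n-1}/F_n=\varphi+(-\varphi)^n/F_n$, a short computation valid in both parities gives $\varphi^n\bigl(\{tF_{n-1}/F_n\}-\tfrac{1}{2}\bigr)=(-\varphi)^n(r/F_n-\tfrac{1}{2})$, whence $|u_r|=|s_{n t(r)}|$ and therefore $u_r^2=s_{n t(r)}^2$. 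Invoking the symmetry $s_{nt}=s_{n(F_n-t)}$ to rewrite $\prod_{t=1}^{F_n-1}|s_{nt}|$ as $\prod_{t=1}^{(F_n-1)/2}s_{nt}^2$, and finally multiplying and dividing by $\prod_{t=1}^{F_n-1}|2\sin\pi t/F_n|=F_n$, the asserted form $\overline{A}_n(\varepsilon)B_n\overline{C}_n(\varepsilon)$ drops out.

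The main obstacle is this final reindexing step, since it is where the Fibonacci-specific arithmetic enters and the two parities of $n$ must be handled uniformly (as they are by the common formula $(-\varphi)^n(r/F_n-\tfrac{1}{2})$). When $F_n$ is even (i.e.\ $3\mid n$) additional care is also needed at the self-paired middle index $r=F_n/2$, but this is exactly compensated by the convention making the last factor in $\overline{C}_n(\varepsilon)$ carry exponent $\tfrac{1}{2}$. Everything else is a purely trigonometric rearrangement already implicit in the proof of \eqref{eq:DecompPFn}.
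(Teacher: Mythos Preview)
Your proposal is correct and follows essentially the same route as the paper's proof: split off the $r=F_n$ factor via \eqref{eq:GoldenRatioFibonacci}, pair $r$ with $F_n-r$ using the product-to-sum and $\cos\theta=1-2\sin^2(\theta/2)$ identities to isolate $v_n$, then reindex via $t=rF_{n-1}\bmod F_n$ and invoke $\prod_{t=1}^{F_n-1}2\sin(\pi t/F_n)=F_n$. The paper in fact gives fewer details at the reindexing step (it simply cites \cite[Section~5]{MV16} for the identification $u_r=s_{nt}$), so your explicit mention of the Cassini identity and the parity-uniform formula is a welcome elaboration rather than a deviation.
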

\begin{proof}
By definition we have that
\begin{align*}
	(P_{F_{n}}(\varphi,\varepsilon))^2
	=&(2 \sin \pi ( F_{n} \varphi + \varepsilon))^2\!\!
		\prod_{r=1}^{F_n-1}\!\! \left( 2 \sin \pi (r\varphi+ \varepsilon) \right)^2\\
	=&(2 \sin \pi ( F_{n} \varphi + \varepsilon))^2\!\!
		\prod_{r=1}^{F_{n}-1} \!\!\big(2 \sin\pi (r \varphi + \varepsilon)\big)\hspace{-0.05cm}
		\big(2 \sin\pi((F_{n}-r)\varphi+\varepsilon)\big)\\
	=&(2 \sin \pi ( F_{n} \varphi + \varepsilon))^2\!\!
		\prod_{r=1}^{F_{n}-1} \!\!2 \big( \cos\pi (2 r\varphi - F_{n}\varphi)
		- \cos \pi (F_{n}\varphi + 2\varepsilon)\big),
\end{align*}
where we have used the identity $\sin x\sin y = (\cos(x-y)-\cos(x+y))/2$ for the final step. Recall from \eqref{eq:GoldenRatioFibonacci} that $F_{n}\varphi = F_{n-1} -(-\varphi)^{n}$ for all $n \in \NN$. Thus, we get
\begin{align*}
	(P_{F_{n}}(\varphi,\varepsilon))^2\!
	=&(2 \sin \pi ( F_{n} \varphi + \varepsilon))^2 \\
	& \!\!\!\times \!\! \prod_{r=1}^{F_{n}-1} \!\!2 \left( \cos\pi (2 r\varphi - F_{n-1} + (-\varphi
		)^{n})	- \cos \pi (F_{n-1} - (-\varphi)^{n} \hspace{-0.05cm} + \hspace{-0.05cm} 2\varepsilon)
		\right)\\
	=&(2 \sin \pi ( F_{n} \varphi + \varepsilon))^2 (-1)^{(F_{n-1}+1)(F_{n}-1)} \\
	&\!\!\!\times \!\! \prod_{r=1}^{F_{n}-1}\!\! 2 \left( -\cos \pi (2 r\varphi + (-\varphi)^{n}) 
	+ \cos \pi ((-\varphi)^{n} - 2\varepsilon)\right).
\end{align*}
Note that $\mathrm{gcd}(F_{n-1},F_{n})=1$, and this implies $(-1)^{(F_{n-1}+1)(F_n-1)}=1$. We now use the identity $\cos x=1-2\sin^2 x/2$ to obtain
\begin{align*}
	(P_{F_{n}}(\varphi,\varepsilon))^2
	=&(2 \sin \pi( F_{n} \varphi + \varepsilon))^2 \\
	&\times \prod_{r=1}^{F_{n}-1}
	4 \left(\sin^2 \pi \left(r \varphi + \frac{(-\varphi)^{n}}{2}\right)- 
	\sin^2 \pi \left(\frac{(-\varphi)^{n}}{2} - \varepsilon\right) \right).
\end{align*}

Applying again the identity \eqref{eq:GoldenRatioFibonacci} as well as the substitution $t=F_{n-1}r \mod{F_{n}}$ we
follow \cite[Section 5]{MV16} to get
\begin{equation*}
	2\sin \pi (r \varphi + (-\varphi)^{n}/2) = s_{nt},
\end{equation*}
with $s_{nt}$ defined in \eqref{def:sn}.
Note that if $r$ runs through $\{1,\ldots,F_{n}-1\}$ then so does $t=F_{n-1}r \mod{F_{n}}$.
With $v_n$ defined as in \eqref{def:vn},
we finally have
\begin{align*}
	(P_{F_{n}}(\varphi,\varepsilon))^2
	=&(2 \sin \pi ( F_{n} \varphi + \varepsilon))^2\!
	\prod_{t=1}^{F_{n}-1}\!\left(s_{nt}^2-v_n^2\right)\\
	=&(2 \sin \pi ( F_{n} \varphi + \varepsilon))^2\!
	\prod_{t=1}^{F_{n}-1}\! s_{nt}^2\!
	\prod_{t=1}^{F_{n}-1}\!\left(1-\frac{v_n^2}{s_{nt}^2}\right)\\
	=&(2 \sin \pi ( F_{n} \varphi + \varepsilon))^2\!
	\prod_{t=1}^{F_{n}-1}\! s_{nt}^2 \!
	\prod_{t=1}^{F_{n}-1}\!\left(1-\frac{v_n^2}{s_{nt}^2}\right)\!
	F_{n}^2 \! \left(\prod_{t=1}^{F_{n}-1} 2\sin \pi \frac{t}{F_{n}} \right)^{-2} \\
	=&\left(\overline{A}_n(\varepsilon) B_{n} \overline{C}_n(\varepsilon)\right)^2,
\end{align*}
where we have used the well known product formula 
\begin{equation*}
	\prod_{r=1}^{q-1}2\sin\left(\frac{\pi r p}{q}\right)=q,
\end{equation*}
for positive integers $p,q \geq 1$ with $\mathrm{gcd}(p,q)=1$. 
\end{proof}


\section{Proofs}\label{sec:ProofMainThm}
Let us now turn to Lemma \ref{lem:boundslarge}. Fix some $N \in \N$, and let 
\begin{equation*}
N= \sum_{j=1}^{m} F_{n_j}
\end{equation*}
be its unique Zeckendorf representation. 
The product $P_N(\varphi)$ may be decomposed as 
\begin{equation*}
P_N(\varphi) = \prod_{j=1}^m \prod_{r=1}^{F_{n_j}} \left| 2 \sin \pi (r\varphi+ k_j \varphi) \right|,
\end{equation*}
where $k_j = \sum_{s=j+1}^m F_{n_s}$ for $1 \leq j \leq m-1$ and $k_m=0$. 
Using the notation introduced in \eqref{eq:shiftedpn}, we get
\begin{equation*}
P_N(\varphi) = \prod_{j=1}^m P_{F_{n_j}}(\varphi, k_j \varphi).
\end{equation*}
By applying again the identity $F_{n}\varphi= F_{n-1}-(-\varphi)^n$ from \eqref{eq:GoldenRatioFibonacci}, we have
\begin{equation*}
k_j \varphi = \sum_{s=j+1}^m \left( F_{n_s-1}-(-\varphi)^{n_s}\right),
\end{equation*}
and thus
\begin{equation}\label{eq:defe}
P_N(\varphi) = \prod_{j=1}^m P_{F_{n_j}}(\varphi, \varepsilon_j ) , \quad \varepsilon_j = - \sum_{s=j+1}^m (-\varphi)^{n_s}.
\end{equation}

We now observe that 
\begin{equation}
\label{eq:ratio}
P_{F_{n_j}}(\varphi, \varepsilon_j) = P_{F_{n_j}}(\varphi) \cdot \frac{P_{F_{n_j}}(\varphi, \varepsilon_j)}{P_{F_{n_j}}(\varphi)} = P_{F_{n_j}}(\varphi) \cdot \frac{\overline{A}_{n_j}(\varepsilon_j)\overline{C}_{n_j}(\varepsilon_j)}{A_{n_j}C_{n_j}}, 
\end{equation}
where $A_{n_j}$, $C_{n_j}$ and $\overline{A}_{n_j}(\varepsilon_j)$, $\overline{C}_{n_j}(\varepsilon_j)$ are defined in \eqref{Def:An}, \eqref{Def:Cn} and \eqref{eq:DefAiEpsi}, \eqref{eq:DefCiEpsi}, respectively. The claim in Lemma \ref{lem:boundslarge} is that $P_{F_{n_j}}(\varphi, \varepsilon_j) \geq 1$ whenever $j$ exceeds some threshold value (independent of $N$). Since we know from Theorem \ref{thm:MainResultMV} that $P_{F_{n_j}}(\varphi) \to 2.407\ldots > 12/5$ as $n_j \to \infty$, this will indeed follow from \eqref{eq:ratio} if we can show that
\begin{equation}\label{eq:AjCj}
\frac{\overline{A}_{n_j}(\varepsilon_j)\overline{C}_{n_j}(\varepsilon_j)}{A_{n_j}C_{n_j}} \geq \frac{5}{12}
\end{equation}
for all sufficiently large $n_j$. 

\subsection{The ratio $\overline{A}_{n_j}(\varepsilon_j)/A_{n_j}$}
We verify \eqref{eq:AjCj} by treating the two ratios $\overline{A}_{n_j}(\varepsilon_j)/A_{n_j}$ and $\overline{C}_{n_j}(\varepsilon_j)/C_{n_j}$ separately, starting with the simpler of the two.
\begin{lem}\label{lem:lowerboundA}
	Let $A_{n_j}$ and $\overline{A}_{n_j}(\varepsilon_j)$ be given in \eqref{Def:An} and \eqref{eq:DefAiEpsi}, respectively. We have 
		\begin{equation}
			\left|\frac{\overline{A}_{n_j}(\varepsilon_j)}{A_{n_j}} \right|=1+p_j +\mathcal{O}(\varphi^{2n_j}), 
		\end{equation}
	where the implied constant is independent of $n_j$,
		\begin{equation}
		\label{eq:defp}	
			p_j:= -\varepsilon_j (-\varphi)^{-n_j} =\sum_{s=j+1}^m(-\varphi)^{n_s-n_j} ,
		\end{equation}
	and $p_j \in [-\varphi^2, \varphi]$.
\end{lem}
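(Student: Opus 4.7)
The plan is to cancel the factor $2F_{n_j}$ between numerator and denominator, recognize a clean multiplicative structure in the argument of the numerator sine, then Taylor-expand and finally bound $p_j$ using the gap property of the Zeckendorf representation.

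First I would simply write
\begin{equation*}
\left|\frac{\overline{A}_{n_j}(\varepsilon_j)}{A_{n_j}}\right|
= \frac{\bigl|\sin\pi\bigl((-\varphi)^{n_j} - \varepsilon_j\bigr)\bigr|}{\bigl|\sin(\pi\varphi^{n_j})\bigr|},
\end{equation*}
and then observe the key algebraic identity
\begin{equation*}
(-\varphi)^{n_j} - \varepsilon_j = (-\varphi)^{n_j} + \sum_{s=j+1}^{m}(-\varphi)^{n_s}
= (-\varphi)^{n_j}\bigl(1+p_j\bigr),
\end{equation*}
which is exactly why the quantity $p_j$ in \eqref{eq:defp} is the right object to introduce.

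Next I would apply the Taylor expansion $\sin(\pi x) = \pi x + \mathcal{O}(x^3)$ as $x\to 0$. Since $\varphi^{n_j}\to 0$ and (as will be checked in the last step) $1+p_j$ is bounded by an absolute constant, both $\pi\varphi^{n_j}$ and $\pi(-\varphi)^{n_j}(1+p_j)$ are small, giving
\begin{equation*}
\bigl|\sin\pi\bigl((-\varphi)^{n_j}(1+p_j)\bigr)\bigr|
= \pi\varphi^{n_j}|1+p_j| + \mathcal{O}(\varphi^{3n_j}),
\qquad
\bigl|\sin(\pi\varphi^{n_j})\bigr| = \pi\varphi^{n_j}\bigl(1+\mathcal{O}(\varphi^{2n_j})\bigr).
\end{equation*}
Dividing yields $|1+p_j| + \mathcal{O}(\varphi^{2n_j})$, with an implied constant depending only on the bound for $|1+p_j|$ and thus independent of $n_j$.

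The last and most delicate part is showing $p_j\in[-\varphi^2,\varphi]$, which both controls the implicit constant and lets us remove the absolute value so that $|1+p_j|=1+p_j$. Using $n_{s+1}\ge n_s+2$ from the Zeckendorf representation and the identity $\varphi^2=1-\varphi$ (so $1-\varphi^2=\varphi$), the maximal positive contribution comes from all exponents $n_s-n_j$ being even (hence $\ge 2,4,6,\dots$), giving
\begin{equation*}
p_j \le \sum_{k\ge 1}\varphi^{2k} = \frac{\varphi^2}{1-\varphi^2} = \varphi,
\end{equation*}
while the most negative contribution comes from all exponents being odd (hence $\ge 3,5,7,\dots$), giving
\begin{equation*}
p_j \ge -\sum_{k\ge 1}\varphi^{2k+1} = -\frac{\varphi^3}{1-\varphi^2} = -\varphi^2.
\end{equation*}
In particular $1+p_j \ge 1-\varphi^2 = \varphi > 0$, so $|1+p_j|=1+p_j$ and the lemma follows. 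The main obstacle is the bookkeeping in this last step: one must recall that consecutive Zeckendorf indices differ by at least $2$, but also note that the parity of $n_s-n_j$ may vary, so the extremal geometric-series estimates really do require separating even and odd exponents as above.
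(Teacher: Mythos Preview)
Your proof is correct and follows essentially the same approach as the paper: cancel the common factor, factor the numerator argument as $(-\varphi)^{n_j}(1+p_j)$, and apply $\sin x = x(1+\mathcal{O}(x^2))$. You actually supply more detail than the paper does---the paper simply asserts that ``it is easily seen that $p_j\in[-\varphi^2,\varphi]$'' and never explicitly addresses why the absolute value $|1+p_j|$ can be dropped, whereas you verify both via the geometric-series bounds and the observation $1+p_j\ge 1-\varphi^2=\varphi>0$.
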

\begin{proof}
Using the definition of $A_{n_j}$ and $\overline{A}_{n_j}(\varepsilon_j)$, we get 
\begin{equation*}
	\left|\frac{\overline{A}_{n_j}(\varepsilon_j)}{A_{n_j}} \right| = \left| \frac{\sin \pi \left( (-\varphi)^{n_j}-\varepsilon_j \right)}{\sin \pi \varphi^{n_j}} \right| = \left| \frac{\sin \pi (-\varphi)^{n_j}(1+p_j)}{\sin \pi \varphi^{n_j}}\right|,
\end{equation*}
with $p_j$ defined as in \eqref{eq:defp}. Since $n_1\geq 2$ and any two consecutive elements $n_s$ and $n_{s+1}$ must necessarily satisfy $n_{s+1}-n_{s} \geq 2$ (recall Definition \ref{def:zeckendorf}), it is easily seen that $p_j \in [-\varphi^2, \varphi]$. We finally apply $\sin x = x(1+\mathcal{O}(x^2))$ to obtain
\begin{align*}
	\left|\frac{\overline{A}_{n_j}(\varepsilon_j)}{A_{n_j}} \right| &= \frac{\pi \varphi^{n_j}(1+p_j) \left( 1+\mathcal{O}(\varphi^{2n_j}) \right)}{\pi \varphi^{n_j}\left( 1+\mathcal{O}(\varphi^{2n_j}) \right)} \\
	&= 1+p_j + \mathcal{O}(\varphi^{2n_j}) .
\end{align*}
\end{proof}

\subsection{The ratio $\overline{C}_{n_j}(\varepsilon_j)/C_{n_j}$}
We now shift our attention to the ratio $\overline{C}_{n_j}(\varepsilon_j)/C_{n_j}$. Our goal is to prove: 
\begin{lem}\label{lem:lowerboundC}
Let $C_{n_j}$ and $\overline{C}_{n_j}(\varepsilon_j)$ be given in \eqref{Def:Cn} and \eqref{eq:DefCiEpsi}, respectively. We have 
	\begin{equation}
		\frac{\overline{C}_{n_j}(\varepsilon_j)}{C_{n_j}} \geq 1-\frac{1}{7}(1+2p_j)^2 - \mathcal{O}(\varphi^{n_j/5}),
	\end{equation}
with $p_j$ as in \eqref{eq:defp} and where the implied constant is independent of $n_j$.
\end{lem}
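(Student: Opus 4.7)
The plan is to compare the two products in $\overline{C}_{n_j}(\varepsilon_j)/C_{n_j}$ factor by factor. Starting from
\begin{equation*}
\frac{\overline{C}_{n_j}(\varepsilon_j)}{C_{n_j}} = \prod_{t=1}^{(F_{n_j}-1)/2} \frac{s_{n_j t}^2 - v_{n_j}^2}{s_{n_j t}^2 - s_{n_j 0}^2},
\end{equation*}
I would first isolate the dependence on $p_j$. Combining $\varepsilon_j = -p_j(-\varphi)^{n_j}$ with the definition \eqref{def:vn} gives $v_{n_j} = 2\sin(\pi(-\varphi)^{n_j}(1/2 + p_j))$, so comparing to $s_{n_j 0} = 2\sin(\pi\varphi^{n_j}/2)$ via a Taylor expansion of $\sin$ about the origin yields
\begin{equation*}
v_{n_j}^2 = (1+2p_j)^2\, s_{n_j 0}^2 + \mathcal{O}(\varphi^{4n_j}).
\end{equation*}
Each factor then splits as $1 - ((1+2p_j)^2 - 1)\, s_{n_j 0}^2/(s_{n_j t}^2 - s_{n_j 0}^2) + \mathcal{O}(\varphi^{4n_j}/(s_{n_j t}^2 - s_{n_j 0}^2))$, cleanly exposing the quadratic dependence on $(1+2p_j)$.

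The second step applies the Weierstrass product inequality $\prod(1-x_t) \geq 1 - \sum x_t$ to the main factor. The case $(1+2p_j)^2 < 1$ forces each ratio to exceed $1$ and makes the lemma trivial, so I may assume $(1+2p_j)^2 \geq 1$. The asymptotic $F_n\varphi^n \to 1/\sqrt{5}$ implies $s_{n_j 0}^2/(s_{n_j t}^2 - s_{n_j 0}^2) = \mathcal{O}(1/t^2)$ uniformly in $t$, with the extremal $t=1$ value approximately $1/19$; combined with $(1+2p_j)^2 \leq 5$, the inputs $x_t$ lie in $[0, 4/19]\subset[0,1]$, and Weierstrass applies. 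The $\mathcal{O}(\varphi^{4n_j})$ perturbation contributes at most $\mathcal{O}(\varphi^{4n_j}\sum_t 1/s_{n_j t}^2) = \mathcal{O}(\varphi^{2n_j})$ to the product, by the classical identity $\sum_{t=1}^{F-1}\csc^2(\pi t/F) = (F^2-1)/3$.

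The third and hardest step is to bound the remaining sum
\begin{equation*}
\big((1+2p_j)^2 - 1\big)\, s_{n_j 0}^2 \sum_{t=1}^{(F_{n_j}-1)/2}\!\frac{1}{s_{n_j t}^2 - s_{n_j 0}^2}
\end{equation*}
by $(1+2p_j)^2/7 + \mathcal{O}(\varphi^{n_j/5})$. I would approximate $s_{n_j t}^2$ by $4\sin^2(\pi t/F_{n_j})$, controlling the discrepancy via the three-distance theorem applied to the Kronecker sequence $\{tF_{n_j-1}/F_{n_j}\}$ (whose Fibonacci denominators make the gaps of order $\varphi^{n_j}$). The tractable sum then evaluates in the limit, using the identity above together with $F_n\varphi^n \to 1/\sqrt{5}$, to $((1+2p_j)^2 - 1)\pi^2/120$, which is strictly below $(1+2p_j)^2/7$ even at the extremal value $(1+2p_j)^2 = 5$ since $4\pi^2/120 < 5/7$.

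The main obstacle is making the comparison $s_{n_j t}^2 \approx 4\sin^2(\pi t/F_{n_j})$ quantitative and uniform across the full summation range. The worst case is precisely the small-$t$ regime, where $s_{n_j t}$ is closest to $s_{n_j 0}$ and the denominators $s_{n_j t}^2 - s_{n_j 0}^2$ are most sensitive to the shift by $\varphi^{n_j}(\{tF_{n_j-1}/F_{n_j}\}-1/2)$; balancing a short-range explicit treatment against a bulk tail estimate is what ultimately produces the exponent $1/5$ in the remainder $\mathcal{O}(\varphi^{n_j/5})$.
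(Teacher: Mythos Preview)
Your approach differs from the paper's, and the main issue is that you have overlooked a one-line shortcut. Since every factor in \eqref{Def:Cn} satisfies $0<1-s_{n0}^2/s_{nt}^2\le 1$, one has trivially $C_{n_j}\le 1$, so
\[
\frac{\overline{C}_{n_j}(\varepsilon_j)}{C_{n_j}}\ \ge\ \overline{C}_{n_j}(\varepsilon_j),
\]
and the whole problem reduces to bounding $\overline{C}_{n_j}(\varepsilon_j)$ alone. The paper does exactly this: it adapts the Mestel--Verschueren analysis to the perturbed product (Lemma~\ref{lem:sumforC}), obtaining $\overline{C}_{n_j}(\varepsilon_j)\ge\prod_{t\ge1}\bigl(1-(1+2p_j)^2/u_t^2\bigr)-\mathcal{O}(\varphi^{n_j/5})$ with $u_t$ as in \eqref{eq:defut}, and then applies Lemma~\ref{lem:prodbound} together with the known numerical bound $\sum_t u_t^{-2}<1/7$. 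No factor-by-factor ratio comparison is needed.

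Your direct route can in principle be made to work, but step~3 as written contains a genuine error. The approximation $s_{n_jt}^2\approx 4\sin^2(\pi t/F_{n_j})$ fails \emph{at leading order} precisely in the small-$t$ regime that dominates the sum: for bounded $t$ one has $s_{n_jt}\sim \pi\varphi^{n_j}u_t$ with $u_t=2(\sqrt{5}\,t-\{t\varphi\}+1/2)$, whereas $2\sin(\pi t/F_{n_j})\sim \pi\varphi^{n_j}\cdot 2\sqrt{5}\,t$, and the shift $-\{t\varphi\}+1/2$ is an $O(1)$ correction to an $O(t)$ quantity. Consequently the limit of $s_{n_j0}^2\sum_t (s_{n_jt}^2-s_{n_j0}^2)^{-1}$ is $\sum_{t\ge1}(u_t^2-1)^{-1}$, not $\pi^2/120$; your numerical verification $4\pi^2/120<5/7$ therefore does not establish the desired inequality. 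You correctly identify small $t$ as the delicate range, but you misdiagnose the difficulty as one of uniformity in an error term rather than a change in the main term. Fixing this would force you to work with the $u_t$ anyway, at which point the paper's route via $C_{n_j}\le 1$ is both shorter and cleaner.
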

The proof of Lemma \ref{lem:lowerboundC} is more elaborate than that of Lemma \ref{lem:lowerboundA}, and we start by stating two preliminary results. 
\begin{lem}[{\cite[Lemma 4.3]{MV16}}]\label{lem:prodbound}
	For $n\geq 2$ and real numbers $a_t$, $t =1,2, \ldots , n$, satisfying $A:=\sum_{t=1}^{n}|a_t|<1$, we have
	$$1-A < \prod_{t=1}^n (1-|a_t|) < \frac{1}{1-A}.$$
\end{lem}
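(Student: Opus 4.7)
The plan is to prove both inequalities by induction on $n$, though the upper bound also admits an essentially one-line direct argument.

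For the lower bound $\prod_{t=1}^n(1-|a_t|) > 1 - A$, I would take as base case $n=2$ the identity $(1-|a_1|)(1-|a_2|) = 1 - (|a_1|+|a_2|) + |a_1||a_2|$ and simply discard the nonnegative cross term. For the inductive step from $n$ to $n+1$, I would factor off the final factor
\begin{equation*}
\prod_{t=1}^{n+1}(1-|a_t|) = (1-|a_{n+1}|)\prod_{t=1}^{n}(1-|a_t|) > (1-|a_{n+1}|)(1-A_n),
\end{equation*}
where $A_n := \sum_{t\leq n}|a_t|$, and then verify the elementary inequality $(1-|a_{n+1}|)(1-A_n) \geq 1-A_{n+1}$ by expanding and dropping the nonnegative term $|a_{n+1}|A_n$. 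The only thing to check is that the hypothesis $A_{n+1} < 1$ keeps $(1 - |a_{n+1}|)$ and $(1-A_n)$ positive so the multiplication preserves direction.

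For the upper bound $\prod_{t=1}^n(1-|a_t|) < 1/(1-A)$, the slickest route is to observe that each factor $1-|a_t|$ lies in $(0,1]$, so $\prod_{t=1}^n(1-|a_t|) \leq 1$, while $1/(1-A) > 1$ as soon as $A > 0$; this yields the strict inequality immediately. If a parallel induction is preferred for symmetry with the lower bound, the step reduces to checking $(1-|a_{n+1}|)/(1-A_n) \leq 1/(1-A_{n+1})$. Both denominators remain positive since $A_{n+1} < 1$, and cross-multiplying collapses the inequality to $|a_{n+1}|(A_{n+1}-2) \leq 0$, which is trivial because $A_{n+1} < 1 < 2$.

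I do not anticipate any substantive obstacle here; the argument is routine algebra. The one delicate point is the strictness bookkeeping: in the degenerate case $A = 0$ all three quantities equal $1$, so a literal reading of the strict forms fails. This is vacuous in every intended application of the lemma — in particular, for the use in the sequel (Lemma \ref{lem:lowerboundC}), the quantities $|a_t|$ are taken from $v_n^2/s_{nt}^2$ with $v_n \neq 0$, so $A > 0$ and strictness is genuine.
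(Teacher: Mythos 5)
The paper itself offers no proof of this lemma; it is quoted verbatim from \cite[Lemma~4.3]{MV16} with no argument supplied, so there is nothing in-paper to compare against. Your proof is correct and essentially the textbook one: the lower bound is the Weierstrass product inequality, established by exactly the induction you describe, and the upper bound follows at once from $\prod_{t=1}^n(1-|a_t|) \le 1 \le 1/(1-A)$.

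One refinement on the strictness discussion. For the lower bound, strict inequality fails not only when $A = 0$ but whenever any individual $a_t$ vanishes: if $a_1 = 0$ and $a_2 \neq 0$, both sides equal $1 - |a_2|$ even though $A = |a_2| > 0$. Correspondingly, your $n=2$ base case $(1-|a_1|)(1-|a_2|) = 1 - A + |a_1||a_2|$ is strict only when both $|a_1|$ and $|a_2|$ are positive, so the implication from $A > 0$ to strictness that you invoke at the end does not hold on its own. What actually justifies the strict form in the application to Lemma~\ref{lem:lowerboundC} is that there every $a_t = v_n^2/s_{nt}^2$ is individually nonzero, not merely that their sum is positive. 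This is immaterial downstream, since the paper only ever relies on the non-strict versions of these inequalities.
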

Lemma \ref{lem:prodbound} is used in \cite{MV16} to show that the product $C_n$ in \eqref{Def:Cn} can be expressed as 
\begin{equation*}
C_n = \prod_{t=1}^{\infty} \left( 1-\frac{1}{u_t^2}\right) - \mathcal{O}(\varphi^{n/5}),
\end{equation*}
where
\begin{equation}\label{eq:defut}
u_t:=2\left(\sqrt{5}t -\{t \varphi\} +\frac{1}{2}\right) .
\end{equation}
We use it here to verify that a similar expression can be given for the perturbed product $\overline{C}_n(\varepsilon)$ whenever the perturbation $\varepsilon$ is sufficiently small.
\begin{lem}\label{lem:sumforC}
Let $\overline{C}_n(\varepsilon)$ be given in \eqref{eq:DefCiEpsi}, and assume that $|\varepsilon| \leq \varphi^{n+1}$. Then 
\begin{equation*}
\overline{C}_n(\varepsilon) \geq \prod_{t=1}^{\infty} \left( 1- \frac{(1-2\varepsilon(-\varphi)^{-n})^2}{u_t^2}\right) - \mathcal{O}(\varphi^{n/5}),
\end{equation*}
with $u_t$ given in \eqref{eq:defut} and where the implied constant is independent of $n$.
\end{lem}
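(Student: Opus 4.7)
The approach mirrors the proof of the analogous unperturbed statement for $C_n$ in \cite[Section~5]{MV16}, while carefully tracking how the extra parameter $\varepsilon$ propagates through the estimates. It is convenient to introduce
\begin{equation*}
q_n := -2\varepsilon(-\varphi)^{-n},
\end{equation*}
so that $v_n = 2\sin\bigl(\pi(-\varphi)^n(1+q_n)/2\bigr)$ and the target right-hand side becomes $\prod_{t=1}^\infty \bigl(1-(1+q_n)^2/u_t^2\bigr)-\mathcal{O}(\varphi^{n/5})$. The hypothesis $|\varepsilon|\leq \varphi^{n+1}$ yields $|q_n|\leq 2\varphi$, hence $(1+q_n)^2 \leq (1+2\varphi)^2 = 5$, safely below $u_1^2=(2+\sqrt{5})^2\approx 17.94$; all factors occurring in $\overline{C}_n(\varepsilon)$ and in the claimed product will therefore remain strictly positive with uniform bounds independent of $\varepsilon$.

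First, since the argument of the sine defining $v_n$ has modulus $\mathcal{O}(\varphi^n)$ uniformly in $\varepsilon$, a Taylor expansion yields
\begin{equation*}
v_n^2 = \pi^2\varphi^{2n}(1+q_n)^2\bigl(1+\mathcal{O}(\varphi^{2n})\bigr),
\end{equation*}
with implicit constant independent of $\varepsilon$ and $n$. The asymptotic for $s_{nt}^2$ derived in \cite[Section~4]{MV16}, of the form $s_{nt}^2 = \pi^2\varphi^{2n} u_t^2\bigl(1+\mathcal{O}(\varphi^n t)\bigr)$ valid uniformly for $t$ in an appropriate range, then gives
\begin{equation*}
\frac{v_n^2}{s_{nt}^2} = \frac{(1+q_n)^2}{u_t^2}\bigl(1+\mathcal{O}(\varphi^n t)\bigr).
\end{equation*}

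Second, I would split $\overline{C}_n(\varepsilon)$ into a head product over $t\leq T_n$ and a tail product over $T_n<t\leq (F_n-1)/2$, where $T_n$ is chosen to balance the resulting errors and reproduce the exponent $n/5$ as in \cite{MV16}. Using the uniform bound $(1+q_n)^2\leq 5$ together with the growth $u_t^2\gtrsim t^2$, Lemma~\ref{lem:prodbound} applied to the tail gives $\prod_{t>T_n}\bigl(1-v_n^2/s_{nt}^2\bigr)\geq 1-\mathcal{O}(\varphi^{n/5})$. For the head, since all factors are uniformly bounded away from $0$ and $1$, the factorwise approximation combines multiplicatively to yield
\begin{equation*}
\prod_{t=1}^{T_n}\left(1-\frac{v_n^2}{s_{nt}^2}\right)\geq \prod_{t=1}^{T_n}\left(1-\frac{(1+q_n)^2}{u_t^2}\right)-\mathcal{O}(\varphi^{n/5}).
\end{equation*}
A final application of Lemma~\ref{lem:prodbound} to the tail of $\sum_t (1+q_n)^2/u_t^2$ allows extending the product on the right from $t\leq T_n$ to $t<\infty$ at an additional cost of $\mathcal{O}(\varphi^{n/5})$, and assembling the estimates produces the claimed inequality.

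The main obstacle is making every error bound uniform in $\varepsilon\in[-\varphi^{n+1},\varphi^{n+1}]$. Because $q_n$ need not be small in this range, the perturbation cannot be treated as an infinitesimal correction to the unperturbed analysis; instead, one must rely throughout on the crude uniform bound $(1+q_n)^2\leq 5$ and verify that the implied constants in the asymptotics for $v_n^2$, $s_{nt}^2$, and their ratio do not depend on $\varepsilon$. A secondary delicate point is preserving the correct direction of inequality when passing from exact expressions to a lower bound, which is precisely why Lemma~\ref{lem:prodbound} is invoked both at the head/tail truncation and at the infinite-product extension step, rather than attempting to work with equalities as in the unperturbed case.
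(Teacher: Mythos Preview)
Your proposal is correct and follows essentially the same route as the paper: Taylor-expand $v_n$, invoke the Mestel--Verschueren asymptotic $s_{nt}=\pi\varphi^n(u_t+\mathcal{O}(\varphi^{n/5}))$, split the product at a threshold $\eta=\lceil\varphi^{-3n/5}\rceil$ chosen to balance head and tail errors, and apply Lemma~\ref{lem:prodbound} both to control the tail and to extend the truncated head product to infinity. The paper's argument differs only in presentation, making the splitting point and the two intermediate subproducts \eqref{eq:OneProduct}--\eqref{eq:TwoProducts} explicit rather than leaving them as a sketch.
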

\begin{proof}
Recall that
\begin{equation}\label{eq:Cnrestated}
\overline{C}_n(\varepsilon)=\prod_{t=1}^{(F_{n}-1)/2}\left(1-\frac{v_{n}^2}{s_{nt}^2}\right),
\end{equation}
where $v_n$ is given in \eqref{def:vn} and $s_{nt}$ is given in \eqref{def:sn}.
The assumption on $\varepsilon$ implies that $|\varepsilon (-\varphi)^{-n}| \leq \varphi$ and
\begin{equation}\label{eq:boundpert}
|1-2\varepsilon(-\varphi)^{-n}| \leq 1+2\varphi = \sqrt{5}.
\end{equation}
It thus follows from $\sin x = x (1+ \mathcal{O}(x^2))$ that
\begin{align}
	|v_n|
	&= 2 \left| \sin \frac{\pi}{2} (-\varphi)^{n} (1 - 2\varepsilon(-\varphi)^{-n})\right| \notag \\
	&=  \pi \varphi^{n} \, |1 - 2\varepsilon(-\varphi)^{-n}| \, (1+\mathcal{O}(\varphi^{2n})) \label{eq:viOrder} \\
	&\leq \pi \varphi^n\sqrt{5} \left( 1+\mathcal{O}(\varphi^{2n})\right) .\notag
\end{align}

We now split the product \eqref{eq:Cnrestated} at $\eta:=\left\lceil \varphi^{-3n/5}\right\rceil$, and treat first the terms where $t\geq \eta$. Using the bound \eqref{eq:viOrder} established for $|v_n|$, one can show that 
\begin{equation}\label{eq:rGreaterEta}
	\prod_{t=\eta+1}^{(F_{n}-1)/2}\left(1-\frac{v_n^2}{s_{nt}^2}\right)
	 = 1 - \mathcal{O}(\varphi^{n/5}).
\end{equation}
The argument is nearly identical to that given in \cite[p.~211]{MV16} for the unperturbed product $C_{n}$, and we therefore omit the details.

Now consider the terms in \eqref{eq:Cnrestated} where $t < \eta$. As deduced in \cite[p.~211]{MV16}, we have 
\begin{equation}	
	s_{nt} = \pi \varphi^{n}(u_t+\mathcal{O}(\varphi^{n/5})),
\end{equation}
with $u_t$ given in \eqref{eq:defut}, and combined with \eqref{eq:viOrder} this implies 
\begin{align*}
	\left|\frac{v_n}{s_{nt}}\right|
	&= \frac{\pi\varphi^{n} \, |1-2\varepsilon (-\varphi)^{-n}| \, (1+\mathcal{O}(\varphi^{2n}
	))}{\pi\varphi^{n}(u_t+\mathcal{O}(\varphi^{n/5}))}\\
	&= \frac{|1-2\varepsilon(-\varphi)^{-n}|}{u_t} \; \frac{1+\mathcal{O}(\varphi^{2n})}{1+\mathcal{O}(\varphi^{n/5})} \\
	&= \frac{|1-2\varepsilon (-\varphi)^{-n}|}{u_t}(1+\mathcal{O}(\varphi^{n/5})).
\end{align*}
It follows that
\begin{align}
	\prod_{t=1}^{\eta}\left(1-\frac{v_n^2}{s_{nt}^2}\right) 
	=& \prod_{t=1}^{\eta}\left(1-\frac{(1-2\varepsilon (-\varphi)^{-n})^2}{u_t^2} - \frac{\mathcal{O}(\varphi^{n/5})}{u_t^2}\right)\nonumber\\
	=&  \prod_{t=1}^{\eta}\left(1-\frac{(1-2\varepsilon (-\varphi)^{-n})^2}{u_t^2}\right) \label{eq:OneProduct}\\
	&\times \prod_{t=1}^{\eta}\left(1 - \frac{\mathcal{O}(\varphi^{n/5})}{u_t^2-(1-2\varepsilon (-\varphi)^{-n})^2}\right). \label{eq:TwoProducts}
\end{align}
We now evaluate the two subproducts \eqref{eq:OneProduct} and \eqref{eq:TwoProducts} separately. Starting with the former, we observe that since $\sum_{t=1}^{\infty}u_t^{-2} < 0.138$ (see \cite[p.~212]{MV16}), it follows from \eqref{eq:boundpert} that
\begin{equation*}
	(1-2\varepsilon (-\varphi)^{-n})^2\sum_{t=\eta +1}^{\infty} \frac{1}{u_t^2}<1.
\end{equation*}
By Lemma \ref{lem:prodbound} we then have
\begin{align*}
	0 &\leq \prod_{t=\eta+1}^{\infty}\left(1-\frac{(1-2\varepsilon (-\varphi)^{-n})^2}{u_t^2}\right) \\
	&\leq \left(1-(1-2\varepsilon (-\varphi)^{-n})^2 \sum_{t=1}^{\infty}\frac{1}{u_{t+\eta}^2}\right)^{-1} \\
	&\leq \left(1-\frac{1}{4}\sum_{t=1}^{\infty}\frac{1}{(t+\eta-1)^2} \right)^{-1} = \left( 1-\mathcal{O}(\eta^{-1}) \right)^{-1},\\
\end{align*}
and thus
\begin{align}\label{eq:FirstProductOrder}
	\prod_{t=1}^{\eta}\left(1-\frac{(1-2\varepsilon (-\varphi)^{-n})^2}{u_t^2}\right)
	&\geq \prod_{t=1}^{\infty}\left(1-\frac{(1-2\varepsilon (-\varphi)^{-n})^2}{u_t^2}\right) \big(1-\mathcal{O}(\eta^{-1})\big).
\end{align}

Now consider the second subproduct \eqref{eq:TwoProducts}. Using the bound \eqref{eq:boundpert}, it is easily checked that 
\begin{equation*}
\sum_{t=1}^{\infty}\frac{1}{u_t^2-(1-2\varepsilon (-\varphi)^{-n})^2}<\infty .
\end{equation*}
Thus, for sufficiently large $n$, we can use Lemma \ref{lem:prodbound} to conclude that 
\begin{align}
	\prod_{t=1}^{\eta}\left(1-\frac{\mathcal{O}(\varphi^{n/5})}{u_t^2-(1-2\varepsilon (-\varphi)^{-n})^2}\right) 
	&> 1- \mathcal{O}(\varphi^{n/5})\sum_{t=1}	^{\infty}\frac{1}{u_t^2-(1-2\varepsilon (-\varphi)^{-n})^2} \notag\\
	&= 1- \mathcal{O}(\varphi^{n/5}). \label{eq:SecondProductOrder}
\end{align}

Inserting the bounds \eqref{eq:FirstProductOrder} and \eqref{eq:SecondProductOrder} for the subproducts \eqref{eq:OneProduct} and \eqref{eq:TwoProducts}, respectively, we get 
\begin{equation}\label{eq:SmallerEta}
\prod_{t=1}^{\eta} \left( 1- \frac{v_n^2}{s_{nt}^2} \right) \geq \prod_{t=1}^{\infty}\left(1-\frac{(1-2\varepsilon (-\varphi)^{-n})^2}{u_t^2}\right) \big(1-\mathcal{O}(\eta^{-1})\big)
	\big(1 - \mathcal{O}(\varphi^{n/5})\big).
\end{equation}
Finally, inserting \eqref{eq:rGreaterEta} and \eqref{eq:SmallerEta} in \eqref{eq:Cnrestated}, and recalling that $\eta = \lceil \varphi^{-3n/5} \rceil$, we get
\begin{align*}
	\overline{C}_n(\varepsilon)
	&= \prod_{t=1}^{\eta}\left(1-\frac{v_n^2}{s_{nt^2}}\right)
	\prod_{t=\eta+1}^{(F_{n}-1)/2}\left(1-\frac{v_n^2}{s_{nt^2}}\right)\\
	&\geq \prod_{t=1}^{\infty}\left(1-\frac{(1-2\varepsilon (-\varphi)^{-n})^2}{u_t^2}\right) - \mathcal{O}(\varphi^{n/5}).
\end{align*}
\end{proof}

We are now equipped to bound the ratio $\overline{C}_{n_j}(\varepsilon_j)/C_{n_j}$ from below.

\begin{proof}[Proof of Lemma \ref{lem:lowerboundC}]
For $n=n_j$ and $\varepsilon=\varepsilon_j =  - \sum_{s=j+1}^m (-\varphi)^{n_s}$, we have $|\varepsilon|\leq \varphi^{n_j+1}$, and thus by Lemma \ref{lem:sumforC} we get
\begin{equation*}
\overline{C}_{n_j}(\varepsilon_j) \geq \prod_{t=1}^{\infty} \left( 1- \frac{(1+2p_j)^2}{u_t^2}\right) - \mathcal{O}(\varphi^{n_j/5}) ,
\end{equation*}
with $p_j$ given in \eqref{eq:defp}. From the definition it is clear that $C_{n_j} \leq 1$, and thus 
\begin{equation*}
\frac{\overline{C}_{n_j}(\varepsilon_j)}{C_{n_j}} \geq \overline{C}_{n_j}(\varepsilon_j) \geq \prod_{t=1}^{\infty} \left( 1- \frac{(1+2p_j)^2}{u_t^2}\right) - \mathcal{O}(\varphi^{n_j/5}).
\end{equation*}
Recall that $p_j \in [-\varphi^2, \varphi]$, and thus $(1+2p_j)^2\leq5$. Moreover, we recall from \cite[p.~212]{MV16} that $\sum_{t=1}^{\infty}1/u_t^2 < 1/7$. Thus, we may apply Lemma \ref{lem:prodbound} to obtain
\begin{align*}
\frac{\overline{C}_{n_j}(\varepsilon_j)}{C_{n_j}} &\geq 1- \sum_{t=1}^{\infty} \frac{(1+2p_j)^2}{u_t^2} - \mathcal{O}(\varphi^{n_j/5}) \\
&> 1- \frac{1}{7}(1+2p_j)^2 - \mathcal{O}(\varphi^{n_j/5}).
\end{align*}
\end{proof}

\subsection{Main proof}
Let us now confirm that Lemma \ref{lem:boundslarge} indeed follows from Lemmas \ref{lem:lowerboundA} and \ref{lem:lowerboundC}.
\begin{proof}[Proof of Lemma \ref{lem:boundslarge}]
We recall that our goal is to show that  
\begin{equation}\label{eq:greater1}
P_{F_{n_j}}(\varphi, \varepsilon_j) \geq 1
\end{equation}
whenever $n_j$ is sufficiently large. We have seen that
\begin{equation}
\label{eq:ratio2}
P_{F_{n_j}}(\varphi, \varepsilon_j) = P_{F_{n_j}}(\varphi) \cdot \frac{\overline{A}_{n_j}(\varepsilon_j)\overline{C}_{n_j}(\varepsilon_j)}{A_{n_j}C_{n_j}}, 
\end{equation}
where $A_{n_j}$, $C_{n_j}$ and $\overline{A}_{n_j}(\varepsilon_j)$, $\overline{C}_{n_j}(\varepsilon_j)$ are defined in \eqref{Def:An}, \eqref{Def:Cn} and \eqref{eq:DefAiEpsi}, \eqref{eq:DefCiEpsi}, respectively. 
By Lemmas \ref{lem:lowerboundA} and \ref{lem:lowerboundC} we get
\begin{equation*}
\frac{\overline{A}_{n_j}(\varepsilon_j)\overline{C}_{n_j}(\varepsilon_j)}{A_{n_j}C_{n_j}} \geq (1+p_j)\left( 1-\frac{1}{7}(1+2p_j)^2\right) + \mathcal{O}(\varphi^{n_j/5}),
\end{equation*}
with $p_j$ given in \eqref{eq:defp}. Consider the function 
\begin{equation*}
g(x) = (1+x)\left(1-\frac{1}{7}(1+2x)^2\right).
\end{equation*}
It is easily checked that for $x \in [-\varphi^2, \varphi]$, this function satisfies $g(x) > 5/11$, 
so there exists $S_1\in \N$ such that
\begin{equation*}
\frac{\overline{A}_{n_j}(\varepsilon_j)\overline{C}_{n_j}(\varepsilon_j)}{A_{n_j}C_{n_j}} > \frac{5}{12}
\end{equation*}
whenever $n_j \geq S_1$.
Moreover, since $P_{F_{n_j}}(\varphi)$ converges to a constant greater than $12/5$, there exists $S_2 \in \N$ such that $P_{F_{n_j}}(\varphi)\geq 12/5$ whenever $n_j \geq S_2$. Inserting these two inequalities in \eqref{eq:ratio2}, we obtain \eqref{eq:greater1} for all $n_j \geq \max \{S_1, S_2\}$. In particular, this means that \eqref{eq:greater1} holds for all $j \geq J= \max\{S_1, S_2\}/2$.
\end{proof}
Finally, we recall that Theorem \ref{thm:CountExLub} is a consequence of Lemma \ref{lem:boundslarge}:
\begin{proof}[Proof of Theorem \ref{thm:CountExLub}]
Let $N$ be any natural number, and let $N= \sum_{j=1}^{m} F_{n_j}$ be its unique Zeckendorf representation. We rewrite $P_N(\varphi)$ as 
\begin{equation*}
P_N(\varphi) = \prod_{r=1}^N |2\sin \pi r \varphi| = \prod_{j=1}^m P_{F_{n_j}}(\varphi, \varepsilon_j), 
\end{equation*}
with $\varepsilon_j$ given in \eqref{eq:defe}. 

Assume first that the length of the Zeckendorf representation of $N$ is smaller than the bound $J$ in Lemma \ref{lem:boundslarge}, i.e.\ $m\leq J$. In this case it follows from Lemma \ref{lem:boundshift} that 
\begin{equation}\label{eq:bound1}
P_N(\varphi) \geq K_1^m \geq K_1^J.
\end{equation}
for some $0<K_1\leq 1$. 

Suppose now that $m>J$. Then by Lemmas \ref{lem:boundshift} and \ref{lem:boundslarge}, we have
\begin{equation}\label{eq:bound2}
P_N(\varphi) = \left( \prod_{j=1}^J P_{F_{n_j}}(\varphi, \varepsilon_j) \right) \left(\prod_{j=J+1}^m P_{F_{n_j}}(\varphi, \varepsilon_j)\right) \geq K_1^J \cdot 1^{m-J} \geq K_1^J.
\end{equation}

Combining \eqref{eq:bound1} and \eqref{eq:bound2} we have $P_N(\varphi)\geq K_1^J$ for all $N$, where $J \in \N$ and $K_1>0$ are absolute constants. It follows that 
\begin{equation*}
\liminf_{N\to \infty} P_N(\varphi) \geq K_1^J > 0.
\end{equation*}
\end{proof}


\section{Concluding remarks}

\subsection{Theorem \ref{thm:CountExLub} for quadratic irrationals}\label{sec:quadirrat}
It is natural to ask whether 
\begin{equation}\label{eq:extquadirr}
\liminf_{N \to \infty} P_N(\alpha)>0
\end{equation}
for any quadratic irrational $\alpha$. Such an extension of Theorem \ref{thm:CountExLub} seems plausible in light of \cite{GrNe18}, where an extension of Theorem~\ref{thm:MainResultMV} to all quadratic irrationals is given (see \cite[Theorem 1.2]{GrNe18}). The authors feel confident that using this extension and the strategy lined out in Section~\ref{sec:strategy}, it should be possible to verify \eqref{eq:extquadirr} for other quadratic irrationals $\alpha$. Nevertheless, there are certain technical challenges involved in switching from the Zeckendorf representation of the integer $N$ to the more general Ostrowski representation of $N$ (as one will then have to do). We leave this question open for the curious reader. For information on Ostrowski representations see for example \cite{AS03} or \cite{RS92}.

\subsection{$P_1(\varphi)$ as a lower bound for $P_N(\varphi)$}
Numerical calculations seem to suggest that for $\varphi=(\sqrt{5}-1)/2$, we actually have
\begin{equation}\label{eq:p1ineq}
P_N(\varphi) \geq P_1(\varphi) = 1.86\ldots ,
\end{equation}
for all $N \in \N$. This inequality would provide a significantly greater lower bound for $\liminf_{N\to \infty} P_N(\varphi)$ than what is attained in our current proof of Theorem \ref{thm:CountExLub}.

More generally, numerical experiments suggest that for $n\geq 3$ and $N \in \{F_{n-1},\ldots,F_n-1\}$ we have
\begin{equation}\label{con:LocBehavPN}
	 P_{F_{n-1}}(\varphi) \leq P_{N}(\varphi) \leq P_{F_n-1}(\varphi).
\end{equation}
Confirming the left inequality in \eqref{con:LocBehavPN} would provide further support for \eqref{eq:p1ineq}, as $P_{F_n}(\varphi)$ appears to converge rapidly towards $2.407\ldots$. On the other hand, should the right inequality in \eqref{con:LocBehavPN} be true, then
\begin{align*}
	P_N(\varphi)\leq P_{F_{n}-1}(\varphi) \leq cF_{n} \leq2cF_{n-1} \leq 2cN,
\end{align*} 
where we have used that $P_{F_n-1}(\varphi) \leq c F_{n}$ (see \cite{MV16}). This would indicate that $P_N(\varphi)$ grows at most linearly in $N$, improving significantly on all known bounds for the asymptotic growth of $P_N(\varphi)$.

\subsection*{Addendum}
It was brought to our attention in April 2023 that the main result in this paper (Theorem \ref{thm:CountExLub}) was in fact established by Verschueren in his PhD thesis in 2016 \cite{VerPhD16}. We regret not being aware of this work, and urge others to cite it when referring to Theorem \ref{thm:CountExLub} in the future.

\Addresses
	
\end{document}